\numberwithin{equation}{section}
\date{}
\begin{document}

\newtheorem{theorem}{Theorem}[section]
\newtheorem{corollary}[theorem]{Corollary}
\newtheorem{conjecture}[theorem]{Conjecture}
\newtheorem{lemma}[theorem]{Lemma}
\newtheorem{fact}[theorem]{Fact}
\newtheorem{proposition}[theorem]{Proposition}

\theoremstyle{remark}
\newtheorem{remark}[theorem]{Remark}

\theoremstyle{remark}
\newtheorem{example}[theorem]{Example}

\theoremstyle{remark}
\newtheorem*{ack}{\rm Acknowledgements}

\author[J\'ulius Korba\v s]{J\'ulius Korba\v s}

\thanks{ Part of this research was carried out while the author was a
member of the research teams 1/0330/13 and 2/0029/13 supported in part by the grant agency
VEGA (Slovakia).}

\title[On the characteristic rank and cup-length]
{The characteristic rank and cup-length in oriented Grassmann manifolds}

\address{Faculty of Mathematics, Physics, and Informatics\\
Comenius University\\ Mlynsk\'a dolina\\ SK-842 48 Bratislava\\ Slovakia and\\
Mathematical Institute of SAS \\ \v Stef\'anikova 49\\ SK-841 73 Bratislava \\ Slovakia }
\email{korbas@fmph.uniba.sk}

\subjclass[2000]{primary 57R20, secondary 55R25}

\keywords{Stiefel-Whitney class, characteristic rank, Grassmann manifold.}

\begin{abstract}
In the first part, this paper studies the characteristic rank of the canonical oriented
$k$-plane bundle over the Grassmann manifold $\widetilde G_{n,k}$ of oriented $k$-planes in
Euclidean $n$-space. It presents infinitely many new exact values if $k=3$ or $k=4$, as well
as new lower bounds for the number in question if $k\geq 5$. In the second part, these
results enable us to improve on the general upper bounds for the $\mathbb Z_2$-cup-length of
$\widetilde G_{n,k}$. In particular, for $\widetilde G_{2^t,3}$ $(t\geq 3)$ we prove that the
cup-length is equal to $2^t-3$, which verifies the corresponding claim of Tomohiro Fukaya's
conjecture from 2008.

\end{abstract}

\maketitle

\section{Introduction and some preliminaries}\label{sec:introduction}
Given a real vector bundle $\alpha$ over a path-connected $CW$-complex $X$, the {\em
characteristic rank of} $\alpha$, denoted $\mathrm{charrank}(\alpha)$, is defined to be
(\cite{naolekarthakur}) the greatest integer $q$, $0 \leq q \leq \mathrm{dim}(X)$, such that
every cohomology class in $H^j(X)$, $0 \leq j \leq q$, is a polynomial in the Stiefel-Whitney
classes $w_i(\alpha)\in H^i(X)$. Here and elsewhere in this paper, we write $H^i(X)$ instead
of $H^i(X;\mathbb Z_2)$.

In particular, if $TM$ is the tangent bundle of a smooth closed connected manifold $M$, then
$\mathrm{charrank}(TM)$ is nothing but the {\em characteristic rank of} $M$, denoted
$\mathrm{charrank}(M)$; this homotopy invariant of smooth closed connected manifolds was
introduced, and in some cases also computed, in \cite{korbas2010}. Results on the
characteristic rank of vector bundles over the Stiefel manifolds can be found in
\cite{knt2012}. The characteristic rank is useful, for instance, in studying the cup-length
of a given space (see \cite{korbas2010}, \cite{naolekarthakur}, and also Section
\ref{sec:newresultsoncup-length} of the present paper).

It is readily seen that the characteristic rank of the canonical $k$-plane bundle
$\gamma_{n,k}$ (briefly $\gamma$) over the Grassmann manifold $G_{n,k}$ $(k\leq n-k)$ of all
$k$-dimensional vector subspaces in $\mathbb R^n$ is equal to $\mathrm{dim}(G_{n,k})=k(n-k)$.
Indeed, as is well known (\cite{2:bor}), for the $\mathbb Z_2$-cohomology algebra
$H^\ast(G_{n,k})$ we can write
\begin{equation}\label{cohomologyalgebraofG_{n,k}}
H^\ast(G_{n,k}) = \mathbb Z_2[w_1,\dots, w_k]/I_{n,k}, \end{equation} where
$\mathrm{dim}(w_i)=i$ and the ideal $I_{n,k}$ is generated by the $k$ homogeneous components
of $(1+w_1+\dots+w_k)^{-1}$ in dimensions $n-k+1,\dots, n$; here the indeterminate $w_i$ is a
representative of the $i$th Stiefel-Whitney class $w_i(\gamma)$ in the quotient algebra
$H^*(G_{n,k})$. For the latter class $w_i(\gamma)$, we shall also use $w_i$ as an
abbreviation.

In contrast to the situation for $G_{n,k}$, the $\mathbb Z_2$-cohomology algebra
$H^\ast(\widetilde G_{n,k})$ $(k\leq n-k)$ of the \lq\lq oriented\rq\rq\, Grassmann manifold
$\widetilde G_{n,k}$ of all {\it oriented} $k$-dimensional vector subspaces in $\mathbb R^n$
is in general unknown. Since $\widetilde G_{n,1}$ can be identified with the
$(n-1)$-dimensional sphere, and the complex quadrics $\widetilde G_{n,2}$ are also well
understood special cases, we shall suppose that $k\geq 3$ throughout the paper.

In Section \ref{sec:mainresult}, we derive infinitely many new exact values if $k=3$ or
$k=4$, as well as new lower bounds for the characteristic rank of the canonical oriented
$k$-plane bundle $\widetilde \gamma_{n,k}$ (briefly $\widetilde \gamma$) over $\widetilde
G_{n,k}$ if $k\geq 5$. As a consequence, for odd $n$, we also obtain better bounds (as
compared to those known from \cite[p. 73]{korbas2010}) on the invariant
$\mathrm{charrank}(\widetilde G_{n,k})$. Then, in Section \ref{sec:newresultsoncup-length},
our results on the characteristic rank of $\widetilde \gamma$ enable us to improve on the
general upper bounds for the $\mathbb Z_2$-cup-length of $\widetilde G_{n,k}$. In particular,
for $\widetilde G_{2^t,3}$ $(t\geq 3)$ we prove that the cup-length is equal to $2^t-3$; this
verifies the corresponding claim of Fukaya's conjecture \cite[Conjecture 1.2]{fukaya}.

\section{On the characteristic rank of the canonical vector bundle over $\widetilde G_{n,k}$}
\label{sec:mainresult}

Using the notation introduced in Section \ref{sec:introduction}, we now state our main
result.

\begin{theorem} \label{th:charrankoforientedcanonicalbundles} For the canonical
$k$-plane bundle $\widetilde \gamma_{n,k}$ over the oriented Grassmann manifold $\widetilde
G_{n,k}$ $(3\leq k\leq n-k)$, with $2^{t-1}<n\leq 2^t$, we have

\begin{enumerate}

\item $\mathrm{charrank}(\widetilde \gamma_{n,3})
\left\{\begin{array}{ll}=n-2&\text{ if } n = 2^t,\\
= n-5+i&\text{ if } n=2^t -i,\,i\in \{1,\,2,\,3\},\\
\geq n-2 &\text{ otherwise; }
\end{array}\right.$

\item $\mathrm{charrank}(\widetilde \gamma_{n,4})
\left\{\begin{array}{ll}
= n-5+i&\text{ if } n = 2^t-i,\,i\in \{0,\,1,\,2,\,3\},\\
\geq n-3 &\text{ otherwise; } \\
\end{array}\right.$

\item if $k\geq 5$, then $\mathrm{charrank}(\widetilde \gamma_{n,k}) \geq n-k+1$.
\end{enumerate}
In addition, if $n$ is odd, then the replacement of the canonical bundle $\widetilde
\gamma_{n,j}$ by the corresponding manifold $\widetilde G_{n,j}$, in $(1)-(3)$, gives the
corresponding result on $\mathrm{charrank}(\widetilde G_{n,j})$.

\end{theorem}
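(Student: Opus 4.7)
The plan is to reduce everything to a question about multiplication by $w_1$ in $H^*(G_{n,k})$, using the orientation double cover $p \colon \widetilde G_{n,k} \to G_{n,k}$. Since $w_i(\widetilde\gamma_{n,k}) = p^*(w_i)$ and $p^*(w_1)=0$, the subalgebra of $H^*(\widetilde G_{n,k})$ generated by $w_1(\widetilde\gamma_{n,k}),\dots,w_k(\widetilde\gamma_{n,k})$ coincides with $\mathrm{im}(p^*)$. The Gysin sequence attached to the double cover,
\[
\cdots \to H^{j-1}(G_{n,k}) \xrightarrow{\,\cdot w_1\,} H^j(G_{n,k}) \xrightarrow{\,p^*\,} H^j(\widetilde G_{n,k}) \longrightarrow H^j(G_{n,k}) \xrightarrow{\,\cdot w_1\,} H^{j+1}(G_{n,k}) \to \cdots ,
\]
then shows that $p^*$ is surjective in degree $j$ if and only if $\cdot w_1 \colon H^j(G_{n,k}) \to H^{j+1}(G_{n,k})$ is injective. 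Writing
\[
m_k(n) := \min\{j \geq 0 : \cdot w_1 \text{ fails to be injective on } H^j(G_{n,k})\},
\]
we get $\mathrm{charrank}(\widetilde\gamma_{n,k}) = m_k(n)-1$, and the task becomes to pinpoint $m_k(n)$.

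Inside the presentation (\ref{cohomologyalgebraofG_{n,k}}), a class $c \in H^j(G_{n,k})$ represented by a polynomial outside $I_{n,k}$ is killed by $w_1$ exactly when $w_1 c$ lies in $I_{n,k}=(\bar w_{n-k+1},\dots,\bar w_n)$. Since no generator of $I_{n,k}$ has degree below $n-k+1$, such a $c$ must have degree at least $n-k$, and the first $c$ arises precisely when some dual class $\bar w_m$ becomes divisible by $w_1$ modulo the $\bar w_{\ell}$'s with $\ell < m$. The $\bar w_m$'s are computed recursively from $(1+w_1+\cdots+w_k)(1+\bar w_1+\bar w_2+\cdots)=1$. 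For parts (1) and (2), I would write out $\bar w_m$ explicitly for $k=3$ and $k=4$ and isolate its $w_1$-free part $\bar w_m|_{w_1=0}$; the 2-adic behaviour of the binomials $\binom{n-j}{\ell}$ (via Lucas's theorem) then controls when $\bar w_m|_{w_1=0}$ vanishes, yielding the dichotomy between $n=2^t$ (where the cancellations persist until degree $n-1$) and $n=2^t-i$ for $i \in \{1,2,3\}$ (where a $w_1$-divisible combination appears already at degree $n-4+i$). For part (3) with $k \geq 5$, a coarser version of the same argument suffices: one checks that $\bar w_{n-k+1}$ is not divisible by $w_1$ in $\mathbb Z_2[w_1,\dots,w_k]$, together with the analogous statement in the next degree, to conclude $m_k(n) \geq n-k+2$. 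The delicate case-split bookkeeping in (1) and (2) is what I expect to be the main obstacle.

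For the final clause, suppose $n$ is odd. The stable identity $TG_{n,k} \oplus (\gamma \otimes \gamma) \cong n\gamma$ yields $w(TG_{n,k}) \cdot w(\gamma \otimes \gamma) = w(\gamma)^n$. Over $\mathbb Z_2$ both $w(\gamma \otimes \gamma)$ (whose factors $1+\alpha_i+\alpha_j$ with $i \neq j$ pair up as $(i,j)$ and $(j,i)$) and $w(\gamma)^{n-1}$ (since $n-1$ is even) are squares, so $w(TG_{n,k}) = w(\gamma)\cdot S^2$ for some $S$ that is polynomial in the $w_i(\gamma)$. Comparing degree-by-degree, noting that $S^2$ is concentrated in even degrees with constant term $1$, one recursively expresses each $w_i(\gamma)$ as a polynomial in $w_j(TG_{n,k})$ for $j \leq i$ (the base case $w_1(TG_{n,k})=nw_1=w_1$ needs $n$ odd). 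Applying $p^*$ carries this recursion over to $H^*(\widetilde G_{n,j})$, so each $w_i(\widetilde\gamma_{n,j})$ is a polynomial in the $w_\ell(T\widetilde G_{n,j})$, and the two subalgebras of $H^*(\widetilde G_{n,j})$ generated by $\{w_i(\widetilde\gamma_{n,j})\}$ and $\{w_\ell(T\widetilde G_{n,j})\}$ coincide. Hence $\mathrm{charrank}(\widetilde G_{n,j}) = \mathrm{charrank}(\widetilde\gamma_{n,j})$ for $n$ odd, and the conclusions of (1)--(3) transfer verbatim.
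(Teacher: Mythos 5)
Your reduction via the Gysin sequence of the double cover is exactly the paper's starting point, and your argument for the final clause (odd $n$, via $TG_{n,k}\oplus(\gamma\otimes\gamma)\cong n\gamma$ and the resulting triangular change of generators) is a sound, essentially self-contained version of the fact the paper simply cites. But from that point on your text is a plan rather than a proof, and the missing part is precisely the mathematical core of the theorem. The statement you defer to ``Lucas's theorem bookkeeping'' is a genuine lemma that has to be proved: with $g_i$ denoting $\bar w_i$ reduced mod $w_1$, one needs that $g_i(w_2,w_3)=0$ and $g_i(w_2,w_3,w_4)=0$ hold exactly when $i=2^t-3$, and that $g_i(w_2,\dots,w_k)\neq 0$ for all $i\geq 2$ when $k\geq 5$ (the paper proves this by an induction based on $g_i=\sum_j w_j^{2^s}g_{i-j\cdot 2^s}$, with a separate trick, reduction mod $w_2,w_3$, for $k\geq 5$). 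Nothing in your proposal establishes these facts, and without them neither the lower bounds nor the case split in (1)--(2) follows.

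Moreover, your guiding principle --- that the first failure of injectivity of $\cdot w_1$ occurs ``precisely when some dual class $\bar w_m$ becomes divisible by $w_1$ modulo the lower $\bar w_\ell$'' --- is both unproved and too coarse for the exact values. For $\widetilde G_{2^t,3}$ the upper bound $\mathrm{charrank}\leq 2^t-2$ comes from the combination $w_2\bar w_{2^t-2}+\bar w_{2^t}=w_1p_{2^t-1}$ (no single $\bar w_m$ is $w_1$-divisible there, since $g_{2^t-2},g_{2^t-1},g_{2^t}$ are all nonzero), and one must additionally check that the quotient $p_{2^t-1}$ does not itself lie in the ideal $I_{n,k}$ --- a verification your scheme omits in every case where an upper bound is claimed. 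The case $(n,k)=(2^t-3,4)$ is worse: the equality $\mathrm{charrank}=2^t-5$ requires proving that the kernel of $\cdot w_1$ vanishes in degree $2^t-5$ even though $g_{2^t-3}=0$ two degrees higher, and the paper needs an extra computation ($w_2g_{2^t-6}+g_{2^t-4}\neq 0$, shown by reducing mod $w_2,w_3$ and an induction giving $z_{2^t-4}=w_4^{2^{t-2}-1}$) to rule out one family of potential relations. None of this ``delicate case-split bookkeeping'' --- which you yourself flag as the expected obstacle --- is carried out, so the proposal does not yet prove parts (1)--(3); it only reproduces the framework in which the real work must be done.
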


We shall pass to a proof of this theorem after some preparations.

For the universal $2$-fold covering $p: \widetilde G_{n,k} \rightarrow G_{n,k}$ $(k\geq 3)$,
the pullback $p^*(\gamma)$ is $\widetilde \gamma$, and for the induced homomorphism in
cohomology we have that $p^*(w_i)=\widetilde w_i$ for all $i$, where $\widetilde w_i$ is an
abbreviated notation, used throughout the paper, for the Stiefel-Whitney class
$w_i(\widetilde \gamma_{n,k})$. Of course, now $\mathrm{charrank}(\widetilde \gamma_{n,k})$
is, in other words, the greatest integer $q$, $0 \leq q \leq k(n-k)$, such that $p^*:
H^{j}(G_{n,k}) \longrightarrow H^{j}(\widetilde G_{n,k})$ is surjective for all $j$, $0 \leq
j \leq q$.

To the covering $p$ there is associated a uniquely determined non-trivial line bundle $\xi$
such that $w_1(\xi) = w_1(\gamma_{n,k})$. This yields (\cite[Corollary 12.3]{milnorstasheff})
an exact sequence of Gysin type,

\begin{equation}
\longrightarrow H^{j-1}(G_{n,k})\overset{w_1}{\longrightarrow} H^{j}(G_{n,k})
\overset{p^*}{\longrightarrow} H^{j}(\widetilde G_{n,k})\longrightarrow
H^{j}(G_{n,k})\overset{w_1}{\longrightarrow} \label{Gysinsequence}
\end{equation}
As is certainly clear from the context, we write here and elsewhere
$H^{j-1}(G_{n,k})\overset{w_1}{\longrightarrow} H^{j}(G_{n,k})$ for the homomorphism given by
the cup-product with the Stiefel-Whitney class $w_1$.

Thus $p^*: H^{j}(G_{n,k}) \longrightarrow H^{j}(\widetilde G_{n,k})$ is surjective if and
only if the subgroup
\begin{equation}
\mathrm{Ker}(H^{j}(G_{n,k})\overset{w_1}{\longrightarrow} H^{j+1}(G_{n,k}))
\label{kernelofmultipbyw_1}
\end{equation}
vanishes.

By \eqref{cohomologyalgebraofG_{n,k}}, a $\mathbb Z_2$-polynomial
\begin{equation}
p_j(w_1,\dots, w_k) = \sum_{i_1+2i_2+\cdots +ki_k=j} a_{i_1, i_2,\dots,
i_k}w_1^{i_1}w_2^{i_2}\cdots w_k^{i_k}, \end{equation} with at least one coefficient $a_{i_1,
i_2,\dots, i_k}\in \mathbb Z_2$ nonzero, represents zero in $H^j(G_{n,k})$ precisely when
there exist some polynomials $q_i(w_1,\dots, w_k)$ (briefly $q_i$) such that
$$p_j=q_{j-n+k-1}{\bar w}_{n-k+1}+ \cdots + q_{j-n} {\bar w}_{n},$$ where
${\bar w}_i(w_1,\dots, w_k)$ (briefly ${\bar w}_i$) is the homogeneous component of
$(1+w_1+\dots+w_k)^{-1}=1+w_1+\dots+w_k+(w_1+\dots+w_k)^2+\dots$ in dimension $i$. Of course,
we have
\begin{equation}\label{recurrentformforbarw_i} {\bar w}_{i}= w_1{\bar w}_{i-1}+w_2{\bar
w}_{i-2}+\cdots +w_{k}{\bar w}_{i-k}. \end{equation} We note that ${\bar w}_i$ represents the
$i$th dual Stiefel-Whitney class of $\gamma$, that is, the Stiefel-Whitney class
$w_i(\gamma^\perp_{n,k})\in H^i(G_{n,k})$ of the complementary $(n-k)$-plane bundle
$\gamma^\perp_{n,k}$ (briefly $\gamma^\perp$); we shall also use ${\bar w}_i$ as an
abbreviation for $w_i(\gamma^\perp)$.

By what we have said, no nonzero homogeneous polynomials in $w_1,\dots, w_k$ in dimensions
$\leq n-k$ represent $0$ in cohomology; therefore the kernel \eqref{kernelofmultipbyw_1} is
the zero-subgroup for all $j\leq n-k-1$, and we always have
\begin{equation}\mathrm{charrank}(\widetilde \gamma_{n,k})\geq n-k-1.
\end{equation}

For the Grassmann manifold $G_{n,k}$ $(3\leq k \leq n-k)$, let $g_i(w_2, \dots, w_k)$
$($briefly just $g_i$$)$ denote the reduction of ${\bar w}_i(w_1, \dots, w_k)$ modulo $w_1$.

The following fact is obvious.

\begin{fact}\label{fact:fromhighertolower}
Let $r<k$. If ${\bar w}_i(w_1, \dots, w_k)=0$, then also ${\bar w}_i(w_1, \dots, w_r)=0$ and,
similarly, if $g_i(w_2, \dots, w_k)=0$, then also $g_i(w_2, \dots, w_r)=0$.
\end{fact}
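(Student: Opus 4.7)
The plan is to reduce the statement to the elementary fact that a polynomial identity is preserved by any substitution of its variables. I would identify both ${\bar w}_i(w_1,\dots,w_r)$ and $g_i(w_2,\dots,w_r)$ as images of their $k$-variable counterparts under one and the same substitution, namely the one sending $w_{r+1},\dots,w_k$ to $0$.

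For the first half I would work in the graded formal power series ring $\mathbb Z_2[[w_1,\dots,w_k]]$ (with $\deg w_\ell=\ell$) and consider the graded $\mathbb Z_2$-algebra homomorphism $\sigma$ sending $w_\ell\mapsto w_\ell$ for $\ell\leq r$ and $w_\ell\mapsto 0$ for $\ell>r$. Because $\sigma(1+w_1+\cdots+w_k)=1+w_1+\cdots+w_r$ and units in this ring have unique inverses, one obtains
$$\sigma\bigl((1+w_1+\cdots+w_k)^{-1}\bigr)=(1+w_1+\cdots+w_r)^{-1}.$$
Comparing degree-$i$ homogeneous components, which $\sigma$ preserves, yields the identity $\sigma\bigl({\bar w}_i(w_1,\dots,w_k)\bigr)={\bar w}_i(w_1,\dots,w_r)$, so the hypothesis ${\bar w}_i(w_1,\dots,w_k)=0$ forces ${\bar w}_i(w_1,\dots,w_r)=\sigma(0)=0$. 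One could equally well prove the substitution identity by induction on $i$ from the recurrence \eqref{recurrentformforbarw_i}, but the uniqueness-of-inverse argument is the quickest route.

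The second half of the fact follows immediately: $g_i$ is by definition the reduction of ${\bar w}_i$ modulo $(w_1)$, and $\sigma$ commutes with this reduction, so $\sigma\bigl(g_i(w_2,\dots,w_k)\bigr)=g_i(w_2,\dots,w_r)$ and vanishing is again preserved. The only possible obstacle here is notational, namely being certain that the polynomial denoted ${\bar w}_i(w_1,\dots,w_r)$ in the $r$-variable context really coincides with $\sigma$ applied to its $k$-variable counterpart; once this book\-keeping is done, the fact is indeed, as the author writes, \emph{obvious}.
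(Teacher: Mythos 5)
Your argument is correct; the paper itself gives no proof (it simply declares the fact obvious), and your substitution homomorphism $w_\ell\mapsto 0$ for $\ell>r$ applied to $(1+w_1+\cdots+w_k)^{-1}$ is exactly the intended justification, so this is essentially the same approach.
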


For $G_{n,k}$, the formula \eqref{recurrentformforbarw_i} implies that $g_i=w_2 g_{i-2} + w_3
g_{i-3}+ \cdots +w_k g_{i-k}$, and an obvious induction proves that

\begin{equation} \label{recurformforg_iforG_{n,k}}
g_i=w_2^{2^s} g_{i-2\cdot 2^{s}} + w_3^{2^s} g_{i-3\cdot 2^s}+ \cdots + w_k^{2^s} g_{i-k\cdot
2^s}
\end{equation}
for all $s$ such that $i\geq 1+k\cdot 2^s$.

In our proof of Theorem \ref{th:charrankoforientedcanonicalbundles}, we shall use the
following.

\begin{lemma}\label{lemma:relationsmodw_1} For the Grassmann manifold
$G_{n,k}$ $(3\leq k \leq n-k)$,
\begin{enumerate}

\item[(i)] $g_i(w_2, w_3)=0$ if and only if $i=2^t - 3$ for some $t\geq 2$;

\item[(ii)] $g_i(w_2, w_3, w_4)=0$ if and only if $i=2^t - 3$ for some $t\geq 2$;

\item[(iii)] if $k\geq 5$ then, for $i\geq 2$, we never have $g_i(w_2, \dots, w_k)=0$.
\end{enumerate}

\end{lemma}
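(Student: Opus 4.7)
The plan is to derive an explicit combinatorial description of $g_i$ and then analyse each of the three cases via that description. Applying the characteristic-two identity $(1+X)^{-1}=\prod_{j\ge 0}(1+X^{2^j})$ to $X=w_2t^2+\cdots+w_kt^k$ and using that squaring is additive in characteristic $2$, one obtains
\[
\sum_{i\ge 0}g_i(w_2,\dots,w_k)\,t^i=\prod_{j\ge 0}\Bigl(1+\sum_{\ell=2}^{k}w_\ell^{2^j}t^{\ell\cdot 2^j}\Bigr),
\]
and expanding the right-hand side and collecting distinct monomials yields
\[
g_i(w_2,\dots,w_k)=\sum_{(e_2,\dots,e_k)\in V_i^{(k)}}w_2^{e_2}\cdots w_k^{e_k},
\]
where $V_i^{(k)}$ consists of all tuples of non-negative integers whose binary representations are pairwise disjoint and which satisfy $\sum_{\ell=2}^{k}\ell\,e_\ell=i$. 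Different tuples produce different monomials, so $g_i=0$ if and only if $V_i^{(k)}=\emptyset$.

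For part (i), write $V_i:=V_i^{(3)}$. When $i$ is even, $(i/2,0)\in V_i$; so assume $i\ge 3$ is odd. Then any $(a,b)\in V_i$ has $b$ odd and hence $a$ even by disjointness, so the substitution $a=2a'$, $b=2b'+1$ gives a bijection $V_i\leftrightarrow V_{(i-3)/2}$. Iterating the map $i\mapsto (i-3)/2$ produces a strictly decreasing sequence of odd integers which stops either at an even value (so $V\ne\emptyset$) or at $1$ (where $V_1=\emptyset$). Reversing the reduction (i.e., $n\mapsto 2n+3$) starting from $1$ gives $1,5,13,\dots$, so the second possibility occurs exactly when $i=2^t-3$ for some $t\ge 2$.

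For part (ii) the forward implication follows from Fact \ref{fact:fromhighertolower} combined with (i). For the converse, suppose $(p,q,r)\in V_{2^t-3}^{(4)}$, and perform a bit-by-bit analysis of the addition $2p+3q+4r$, using that $i=2^t-3$ has bit $0$ set, bit $1$ clear, and bits $2,\dots,t-1$ set. Matching bit $0$ forces bit $0$ of $q$ to equal $1$; matching bit $1$ (where $i_1=0$) then forces bit $1$ of $q$ to equal $1$ and generates a carry that propagates unbroken through positions $2,\dots,t-1$, pinning down the corresponding bit of $q$ to $1$ at each step. Consequently $q\ge 2^t-1$, so $3q\ge 3\cdot 2^t-3>2^t-3=i$, contradicting $2p+3q+4r=i$. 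The main obstacle is this carry-bookkeeping induction, which relies on disjointness at each position $j\le t-1$ to eliminate the contributions of $p$ and $r$ at the bits being examined.

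For part (iii), if $i\ne 2^t-3$ then part (i) supplies a non-empty $V_i^{(3)}$, which embeds into $V_i^{(k)}$ by padding with zeros. Otherwise $i=2^t-3$ with $t\ge 3$ (forced by $i\ge 2$), and the tuple $(e_2,\dots,e_k)$ with $e_4=2^{t-2}-2$, $e_5=1$, and all other $e_\ell=0$ lies in $V_i^{(k)}$: indeed $4(2^{t-2}-2)+5=2^t-3$, and the binary support of $2^{t-2}-2$ consists of positive bit-positions only, hence is disjoint from the support $\{0\}$ of $1$. This construction requires $k\ge 5$.
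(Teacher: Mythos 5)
Your proposal is correct, but it takes a genuinely different route from the paper. The paper works entirely inside the recursion \eqref{recurformforg_iforG_{n,k}}, running a strong induction with a case split according to the position of $i$ in $[3\cdot 2^{\lambda},6\cdot 2^{\lambda}]$ and arguing at each stage that the nonzero summands cannot cancel because a cancelling term would force $i$ outside the range under consideration. You instead extract from $(1+X)^{-1}=\prod_{j\geq 0}(1+X^{2^{j}})$ a closed-form, multiplicity-free expansion of $g_i$: the monomials $w_2^{e_2}\cdots w_k^{e_k}$ occurring are exactly those with $\sum \ell e_\ell=i$ and pairwise disjoint binary supports (each occurring with coefficient $1$, since uniqueness of binary representation leaves only one way to assemble a given tuple from the factors), so vanishing of $g_i$ becomes a purely arithmetic question about $V_i^{(k)}$. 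I checked the three resulting arguments: the descent bijection $V_i\cong V_{(i-3)/2}$ for odd $i\geq 3$ terminates at $V_1=\emptyset$ precisely on the orbit $\{2^t-3\}$; the carry analysis for (ii) does close up, since once $q_{j-1}=q_{j-2}=1$ disjointness kills $p_{j-1}$ and $r_{j-2}$ at position $j$, the bit pattern of $2^t-3$ forces $q_0=\cdots=q_{t-1}=1$, and then $3q>2^t-3$; and the witness $4(2^{t-2}-2)+5=2^t-3$ with disjoint supports settles (iii). Your approach buys an explicit description of $g_i$ itself (strictly more than non-vanishing) and replaces cancellation-tracking with transparent combinatorics; the paper's approach buys staying within the recursive identity it already needs elsewhere and requires no generating-function preliminaries. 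The one step you should write out rather than assert is the coefficient-one claim underlying ``$g_i=0$ iff $V_i^{(k)}=\emptyset$'', as the whole reduction rests on it.
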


\begin{proof}[\rm Proof of Lemma \ref{lemma:relationsmodw_1}]

\underline{Part (i)}. In view of Fact \ref{fact:fromhighertolower}, the equality
\[g_{2^t-3}(w_2, w_3)=0\] for $t\geq 2$ (already proved, in a different way, in \cite{korbas2010})
is a direct consequence of the equality $g_{2^t-3}(w_2,
w_3, w_4)=0$; the latter will be verified in the proof of Part (ii).

Now we prove that $g_i(w_2, w_3)\neq 0$ for $i\neq 2^t-3$. For $i<14$, this is readily
verified by a direct calculation. Let us suppose that $i\geq 14$. Then, for each $i$, there
exists a uniquely determined integer $\lambda$ $(\lambda \geq 2)$ such that $ 2^\lambda < i/3
\leq 2^{\lambda +1}$. For proving the claim, it suffices to verify it in each of the
following three situations:

\begin{enumerate}
\item[(a)] $3\cdot 2^\lambda +1 \leq i < 5\cdot 2^\lambda$;
\item[(b)] $i = 5\cdot 2^\lambda$;
\item[(c)] $5\cdot 2^\lambda +1\leq i \leq 6\cdot 2^\lambda$.
\end{enumerate}

Case $(a)$. By \eqref{recurformforg_iforG_{n,k}}, we have
$$g_i=w_2^{2^\lambda} g_{i-2\cdot 2^{\lambda}} +
w_3^{2^\lambda} g_{i-3\cdot 2^\lambda}.$$ By our assumption, $i$ is not of the form $2^j -
3$, and one sees that $i-2\cdot 2^{\lambda}$ or $i-3\cdot 2^\lambda$ is not of the form $2^j
- 3$. If just one of the numbers $i-2\cdot 2^{\lambda}$, $i-3\cdot 2^\lambda$ is not of the
form $2^j - 3$, then it suffices to apply the inductive hypothesis (and the proved fact that
$g_{2^t-3}=0$ for $t\geq 2$). If none of the numbers $i-2\cdot 2^{\lambda}$ and $i-3\cdot
2^\lambda$ have the form $2^j - 3$ then, by the inductive hypothesis, both $g_{i-2\cdot
2^{\lambda}}$ and $g_{i-3\cdot 2^\lambda}$ are nonzero and, as a consequence, also $g_i\neq
0$. Indeed, now a necessary condition for $g_i= 0$ is that $g_{i-2\cdot 2^\lambda}$ should
contain the term $w_3^{2^\lambda}$; but the latter implies that $i-2\cdot 2^\lambda \geq
3\cdot 2^\lambda$, thus $i\geq 5\cdot 2^\lambda$, which is not fulfilled.

Case $(b)$. One directly sees, from $(1+w_2+w_3)^{-1}= 1+w_2+w_3+(w_2+w_3)^2+\cdots$, that
$$g_{5\cdot 2^\lambda} = w_2^{5\cdot 2^{\lambda -1}} + \text{ different\,\, terms} \neq 0.$$

Case $(c)$. By a repeated use of \eqref{recurformforg_iforG_{n,k}}, we now have that
\begin{align}
g_i&=w_2^{2^\lambda} (w_2^{2^\lambda}g_{i-4\cdot 2^\lambda} + w_3^{2^\lambda}g_{i-5\cdot
2^\lambda})\cr &+ w_3^{2^\lambda} (w_2^{2^{\lambda -1}}g_{i-4\cdot 2^\lambda}
+w_3^{2^{\lambda -1}}g_{i-9\cdot 2^{\lambda -1}})\cr &= (w_2^{2^{\lambda +1}} +
w_2^{2^{\lambda -1}}w_3^{2^\lambda})g_{i-4\cdot 2^\lambda}\cr
&+w_2^{2^\lambda}w_3^{2^\lambda}g_{i-5\cdot 2^\lambda} + w_3^{3\cdot 2^{\lambda
-1}}g_{i-9\cdot 2^{\lambda -1}}.
\end{align}

If $i-4\cdot 2^\lambda$ is of the form $2^j - 3$, then one verifies that $i-5\cdot
2^{\lambda}$ or $i-9\cdot 2^{\lambda -1}$ is not of the form $2^j - 3$. If just one of the
numbers $i-5\cdot 2^{\lambda}$, $i-9\cdot 2^{\lambda -1}$ is not of the form $2^j - 3$, then
it suffices to apply the inductive hypothesis (and the proved fact that $g_{2^t-3}=0$ for
$t\geq 2$). If none of the numbers $i-5\cdot 2^{\lambda}$ and $i-9\cdot 2^{\lambda -1}$  have
the form $2^j - 3$ then, by the inductive hypothesis, both $g_{i-5\cdot 2^{\lambda}}$ and
$g_{i-9\cdot 2^{\lambda -1}}$ are nonzero and, as a consequence, also $g_i\neq 0$. Indeed,
now a necessary condition for $g_i= 0$ is that $g_{i-5\cdot 2^{\lambda}}$ should contain the
term $w_3^{2^{\lambda -1}}$; but the latter implies that $i-5\cdot 2^{\lambda} \geq 3\cdot
2^{\lambda -1}$, thus $i> 6\cdot 2^\lambda$, which is not fulfilled.

Finally, let us suppose that $i-4\cdot 2^\lambda$ is not of the form $2^j - 3$ (thus, by the
inductive hypothesis, $g_{i-4\cdot 2^\lambda}\neq 0$). Then, in order to have $g_i=0$, it
would be necessary to \lq\lq eliminate\rq\rq\, $w_2^{2^{\lambda +1}}g_{i-4\cdot 2^\lambda}$.
This would only be possible if $g_{i-5\cdot 2^{\lambda}}$ contains $w_2^{2^\lambda}$, thus if
$i-5\cdot 2^{\lambda}\geq 2\cdot 2^{\lambda}$, hence $i\geq 7\cdot {2^\lambda}$, which is not
fulfilled, or if $g_{i-9\cdot 2^{\lambda -1}}$ contains $w_2^{2^{\lambda +1}}$, thus if
$i-9\cdot 2^{\lambda -1} \geq 2\cdot 2^{\lambda +1}$, hence $i\geq 17\cdot 2^{\lambda -1}
\geq 8\cdot 2^\lambda$, which is not fulfilled.

\underline{Part (ii)}. We first prove that $g_{2^t - 3}(w_2, w_3, w_4)=0$ for $t\geq 2$. We
directly see that $g_1=0$ and $g_5=0$. For $t\geq 3$ we have, by
\eqref{recurformforg_iforG_{n,k}} and the inductive hypothesis, that
\begin{equation}
g_{2^t-3} =w_2^{2^{t-3}}g_{3\cdot 2^{t-2}-3} + w_3^{2^{t-3}}g_{5\cdot 2^{t-3}-3}.
\end{equation}

Thus, again by \eqref{recurformforg_iforG_{n,k}} and the inductive hypothesis, we obtain
\begin{align} g_{2^t-3} &= w_2^{2^{t-3}}(w_2^{2^{t-3}}g_{2^{t-1}-3}+w_3^{2^{t-3}}
g_{3\cdot 2^{t-3} -3}+w_4^{2^{t-3}}g_{2^{t-2} -3})\cr &+ w_3^{2^{t-3}}(w_2^{2^{t-3}}g_{3\cdot
2^{t-3} -3}+w_3^{2^{t-3}}g_{2^{t-2} -3}+w_4^{2^{t-3}}g_{2^{t-3}-3})\cr &=0.
\end{align}

\underline{Part (iii)}. First, one readily calculates that $g_{5}(w_2, w_3, w_4, w_5)=w_5
\neq 0$. Then for completing the proof of Part (iii), in view of what we have proved up to
now and Fact \ref{fact:fromhighertolower}, it suffices to verify that $g_{2^t -3}(w_2, w_3,
w_4, w_5)\neq 0$ for $t\geq 4$. For this, we show that $h_{2^t -3}(w_4, w_5)$ is nonzero for
$t\geq 4$, where $h_{2^t -3}(w_4, w_5)$ (briefly $h_{2^t -3}$) is obtained by reducing
$g_{2^t -3}(w_2, w_3, w_4, w_5)$ modulo $w_2$ and $w_3$. Indeed, by
\eqref{recurformforg_iforG_{n,k}}, we see that
\begin{equation}
h_{2^t -3} = w_4^{2^{t-3}}h_{2^{t-1} -3} + w_5^{2^{t-3}}h_{3\cdot 2^{t-3} -3}.
\end{equation}
By the inductive hypothesis, $h_{2^{t-1} -3}\neq 0$; thus a necessary condition for $h_{2^t
-3}=0$ is that the term $w_5^{2^{t-3}}$ should be contained in $h_{2^{t-1} -3}$. But this
would require that $2^{t-1} -3\geq 5\cdot {2^{t-3}}$, which is not fulfilled. This finishes
the proof of Lemma \ref{lemma:relationsmodw_1}.
\end{proof}

The announced preparations are finished, and we can prove Theorem
\ref{th:charrankoforientedcanonicalbundles}.

\begin{proof}[\rm Proof of Theorem
\ref{th:charrankoforientedcanonicalbundles}] Recall that, for $G_{n,k}$ $(k\leq n-k)$ there
are no polynomial relations among $w_1, w_2, \dots, w_k$ in dimensions $\leq n-k$, and a
nonzero polynomial $p_{n-k+1}\in \mathbb Z_2[w_1, w_2, \dots, w_k]$ represents $0\in
H^{n-k+1}(G_{n,k})$ if and only if $p_{n-k+1}={\bar w}_{n-k+1}$. From the Gysin sequence
\eqref{Gysinsequence} we see that
\begin{align}\label{criterionfordegree(n-k)}
&p^*: H^{n-k}(G_{n,k}) \longrightarrow H^{n-k}(\widetilde G_{n,k}) \text{ is\,
surjective\,}\cr &\text{and,\, equivalently,\,\,} \mathrm{charrank}(\widetilde
\gamma_{n,k})\geq n-k,\cr &\text{precisely\, when\,} g_{n-k+1}(w_2, \dots, w_k)\neq 0.
\end{align}
We still observe that, for $3\leq k \leq n-k$,
\begin{equation}\label{whenfirsttwogenrelatsarenonzeromodw_1}
\text{ if } g_{n-k+1}\neq 0 \text{ and } g_{n-k+2}\neq 0, \text{ then }
\mathrm{charrank}(\widetilde \gamma_{n,k})\geq n-k+1. \end{equation}

Indeed, by the criterion \eqref{criterionfordegree(n-k)}, we have
$\mathrm{charrank}(\widetilde \gamma_{n,k})\geq n-k$. To show that this inequality can be
improved as claimed in \eqref{whenfirsttwogenrelatsarenonzeromodw_1}, let us suppose that a
nonzero polynomial $p_{n-k+1}\in \mathbb Z_2[w_1, \dots, w_k]$ represents an element in
$\mathrm{Ker}(H^{n-k+1}(G_{n,k})\overset{w_1}{\longrightarrow} H^{n-k+2}(G_{n,k}))$. Thus
$w_1p_{n-k+1}$ represents $0\in H^{n-k+2}(G_{n,k})$. This means that, in $\mathbb Z_2[w_1,
\dots, w_k]$, $w_1p_{n-k+1}= aw_1{\bar w}_{n-k+1}+b{\bar w}_{n-k+2}$, where $a=1$ or $b=1$.
Of course, since $g_{n-k+2}\neq 0$, necessarily $b=0$, $a=1$. But the polynomial equality
$w_1p_{n-k+1}= w_1{\bar w}_{n-k+1}$ implies that $p_{n-k+1}= {\bar w}_{n-k+1}$, thus
$p_{n-k+1}$ represents $0\in H^{n-k+1}(G_{n,k})$. So we see that
$\mathrm{Ker}(H^{n-k+1}(G_{n,k})\overset{w_1}{\longrightarrow} H^{n-k+2}(G_{n,k}))=0$ and
$\mathrm{charrank}(\widetilde \gamma_{n,k})\geq n-k+1$.

\underline{Proof of Parts (1) and (2)}. By Lemma \ref{lemma:relationsmodw_1}(i), (ii),
$g_{n-k+1}(w_2, \dots, w_k)$ vanishes if $(n,k)\in \{(2^t-1,3), (2^t,4)\}$. By the criterion
\eqref{criterionfordegree(n-k)}, for these pairs $(n,k)$, the homomorphism $p^*:
H^{n-k}(G_{n,k}) \longrightarrow H^{n-k}(\widetilde G_{n,k})$ is not surjective; thus, there
is a non-Stiefel-Whitney generator in $H^{n-k}(\widetilde G_{n,k})$ if $(n,k)\in \{(2^t-1,3),
(2^t,4)\}$, and we conclude that $\mathrm{charrank}(\widetilde \gamma_{2^t-1,3})=2^t-5 =
\mathrm{charrank}(\widetilde \gamma_{2^t,4})$.

Of course, again by Lemma \ref{lemma:relationsmodw_1}(i), (ii), we have $g_{n-k+1}(w_2,
\dots, w_k)\neq 0$ if $(n,k)\not\in \{(2^t-1,3), (2^t,4)\}$ and $k\in \{3,\, 4\}$. By the
criterion \eqref{criterionfordegree(n-k)}, for these pairs $(n,k)$, the homomorphism $p^*:
H^{n-k}(G_{n,k}) \longrightarrow H^{n-k}(\widetilde G_{n,k})$ is surjective; so we have that
$\mathrm{charrank}(\widetilde \gamma_{n,3})\geq n-3$ if $n\neq 2^t-1$ and
$\mathrm{charrank}(\widetilde \gamma_{n,4})\geq n-4$ if $n\neq 2^t$.

To prove the result for $\widetilde G_{2^t -2,3}$, we first recall (Lemma
\ref{lemma:relationsmodw_1}(i)) that $g_{2^t-4}\neq 0,\, g_{2^t-3}=0$, and $g_{2^t-2}\neq 0$.
Thus ${\bar w}_{2^t-3}=w_1 p_{2^t-4}$ for some polynomial $p_{2^t-4}$. The latter cannot
represent $0$ in the cohomology group $H^{2^t-4}(G_{2^t -2,3})$; indeed, if $p_{2^t-4}$
represents zero, then necessarily $p_{2^t-4}={\bar w}_{2^t-4}$ (as polynomials), thus we have
a relation ${\bar w}_{2^t-3}=w_1 {\bar w}_{2^t-4}$, which is impossible. This implies (see
\eqref{Gysinsequence}) that $p^\ast: H^{2^t-4}(G_{2^t -2,3}) \rightarrow H^{2^t-4}(\widetilde
G_{2^t -2,3})$ is not an epimorphism, thus $\mathrm{charrank}(\widetilde
\gamma_{2^t-2,3})\leq 2^t-5$. By \eqref{criterionfordegree(n-k)}, since $g_{2^t-4}\neq 0$, we
have $\mathrm{charrank}(\widetilde\gamma_{2^t-2,3})\geq 2^t-5$, which proves the claim for
$\widetilde G_{2^t -2,3}$. The result for $\widetilde G_{2^t -1,4}$ can be derived in an
analogous way.

Now we prove the claim for $\widetilde G_{2^t -3,3}$. We have $g_{2^t-5}\neq 0,\,
g_{2^t-4}\neq 0$, and $g_{2^t-3}= 0$. Thus ${\bar w}_{2^t-3}=w_1 p_{2^t-4}$ for some
polynomial $p_{2^t-4}$. The latter cannot represent $0$ in $H^{2^t-4}(G_{2^t -3,3})$. Indeed,
if $p_{2^t-4}$ represents zero, then $p_{2^t-4}=aw_1{\bar w}_{2^t-5}+b{\bar w}_{2^t-4}$ in
$\mathbb Z_2[w_1, w_2, w_3]$, with $a=1$ or $b=1$; as a consequence, we would have ${\bar
w}_{2^t-3}=aw_1^2{\bar w}_{2^t-5}+bw_1{\bar w}_{2^t-4}$, which is impossible. From the Gysin
sequence \eqref{Gysinsequence}, we see that $p^\ast: H^{2^t-4}(G_{2^t -3,3}) \rightarrow
H^{2^t-4}(\widetilde G_{2^t -3,3})$ is not an epimorphism. Thus $\mathrm{charrank}(\widetilde
\gamma_{2^t-3,3})\leq 2^t-5$. At the same time, by the observation
\eqref{whenfirsttwogenrelatsarenonzeromodw_1}, we have $\mathrm{charrank}(\widetilde
\gamma_{2^t-3,3})\geq 2^t-5$. This proves the claim for $\widetilde G_{2^t -3,3}$; again, the
result for $\widetilde G_{2^t -2,4}$ can be proved analogously.

We pass to proving the result for $\widetilde G_{2^t,3}$. We know that none of $g_{2^t -2},
\,g_{2^t -1}, \, g_{2^t}$ vanishes. By \eqref{whenfirsttwogenrelatsarenonzeromodw_1}, we see
that $\mathrm{charrank}(\widetilde \gamma_{2^t,3})\geq 2^t-2$. At the same time, since $w_2
g_{2^t-2} + g_{2^t}=w_3 g_{2^t-3}=0$, we have (as for $\mathbb Z_2$-polynomials) $w_2 {\bar
w}_{2^t-2} + {\bar w}_{2^t}=w_1 p_{2^t-1}$, for some polynomial $p_{2^t-1}$. The latter
cannot represent $0\in H^{2^t-1}(G_{2^t,3})$. Indeed, $p_{2^t-1}$ representing $0$ would mean
that $p_{2^t-1}=a w_1{\bar w}_{2^t-2}+b{\bar w}_{2^t-1}$ (where $a=1$ or $b=1$), which
implies an impossible relation ${\bar w}_{2^t}=(aw_1^2 +w_2){\bar w}_{2^t-2}+b w_1{\bar
w}_{2^t-1}$. Thus $p_{2^t-1}$ represents a nonzero element in
\[\mathrm{Ker}(H^{2^t-1}(G_{2^t,3})\overset{w_1}{\longrightarrow} H^{2^t}(G_{2^t,3})),\] and we
have that $\mathrm{charrank}(\widetilde \gamma_{2^t,3})\leq 2^t-2$, which proves the claim
for $\widetilde G_{2^t,3}$.

Now we shall pass to $\widetilde G_{2^t-3,4}$. Then we have $g_{2^t -6}\neq 0, \,g_{2^t
-5}\neq 0, \, g_{2^t-4}\neq 0,\,g_{2^t-3} = 0$. By
\eqref{whenfirsttwogenrelatsarenonzeromodw_1}, we know that $\mathrm{charrank}(\widetilde
\gamma_{2^t-3,4})\geq 2^t-6$. To improve this inequality, we now show that
\begin{equation}\label{ker(fromH^{2^t-5}(G_{2^t-3,4})toH^{2^t-4}(G_{2^t-3,4}))=zero}
\mathrm{Ker}(H^{2^t-5}(G_{2^t-3,4})\overset{w_1}{\longrightarrow}
H^{2^t-4}(G_{2^t-3,4}))=0.\end{equation} Let a nonzero polynomial $p_{2^t-5}$ represent an
element in the kernel under question. This means that the polynomial $w_1p_{2^t-5}$
represents $0\in H^{2^t-4}(G_{2^t-3,4})$. Consequently, $w_1p_{2^t-5}= aw_1^2{\bar
w}_{2^t-6}+bw_2{\bar w}_{2^t-6}+cw_1{\bar w}_{2^t-5}+d{\bar w}_{2^t-4}$ in $\mathbb Z_2[w_1,
w_2, w_3, w_4]$, where at least one of the coefficients $a,\,b,\,c,\,d$ is equal to $1$. We
cannot have $b=d=1$, because $w_2{\bar w}_{2^t-6}+{\bar w}_{2^t-4}$ reduced mod $w_1$ is
$w_2{g}_{2^t-6}+{g}_{2^t-4}$ and, as we shall see in the next step, the latter is not zero.
Indeed, let $z_i$ denote the reduction of $g_i$ modulo $w_2$ and $w_3$. Then $w_2
g_{2^t-6}+g_{2^t-4}$ reduced modulo $w_2$ and $w_3$ is equal to $z_{2^t-4}$. A direct
calculation gives that $z_{12}=w_4^3$ and, by induction, we obtain that
$z_{2^t-4}=w_4^{2^{t-3}}z_{2^{t-1}-4}=w_4^{2^{t-3}}w_4^{2^{t-3}-1}=w_4^{2^{t-2}-1}\neq 0$. So
we have shown that $w_2 g_{2^t-6}+g_{2^t-4}\neq 0$. One also readily sees that it is
impossible to have $(b,d)=(1,0)$ as well as $(b,d)=(0,1)$. Thus the only remaining
possibility is $(b,d)=(0,0)$. So we obtain $w_1p_{2^t-5}= w_1 (aw_1{\bar w}_{2^t-6}+{c\bar
w}_{2^t-5})$, thus $p_{2^t-5}= aw_1{\bar w}_{2^t-6}+c{\bar w}_{2^t-5}$. This means that
$p_{2^t-5}$ represents $0\in H^{2^t-5}(G_{2^t-3,4})$, and we have proved the equality
\eqref{ker(fromH^{2^t-5}(G_{2^t-3,4})toH^{2^t-4}(G_{2^t-3,4}))=zero}.

As a consequence, we have $\mathrm{charrank}(\widetilde \gamma_{2^t-3,4})\geq 2^t-5$. Since
$g_{2^t-3}=0$, we have that ${\bar w}_{2^t-3}=w_1 p_{2^t-4}$ for some polynomial $p_{2^t-4}$,
about which one can show (similarly to situations of this type dealt with above) that it
cannot represent zero in cohomology. Thus we also have $\mathrm{charrank}(\widetilde
\gamma_{2^t-3,4})\leq 2^t-5$, and finally $\mathrm{charrank}(\widetilde \gamma_{2^t-3,4})=
2^t-5$.

In view of Lemma \ref{lemma:relationsmodw_1}(i), (ii), for all the manifolds $\widetilde
G_{n,3}$ and $\widetilde G_{n,4}$ that remain, the observation
\eqref{whenfirsttwogenrelatsarenonzeromodw_1} implies the lower bounds stated in Theorem
\ref{th:charrankoforientedcanonicalbundles}(1),(2).

\underline{Proof of Part (3)}. For $k\geq 5$, Lemma \ref{lemma:relationsmodw_1}(iii) says
that $g_{n-k+1}\neq 0$ and $g_{n-k+2}\neq 0$; thus the observation
\eqref{whenfirsttwogenrelatsarenonzeromodw_1} applies, giving that
$\mathrm{charrank}(\widetilde \gamma_{n,k})\geq n-k+1$ in all these cases.

To prove the final statement of the theorem, it suffices to recall that, if $n$ is odd, then
(see \cite[p. 72]{korbas2010}) we have $w_i(\widetilde G_{n,k})= \widetilde w_i +
Q_i(\widetilde w_2, \dots, \widetilde w_{i-1})$ $(i\leq k)$, where $Q_i$ is a $\mathbb
Z_2$-polynomial, and $\widetilde w_j = w_j(\widetilde G_{n,k}) + P_j(w_2(\widetilde G_{n,k}),
\dots, w_{j-1}(\widetilde G_{n,k}))$ $(j\geq 2)$ for some $\mathbb Z_2$-polynomial $P_j$.

The proof of Theorem \ref{th:charrankoforientedcanonicalbundles} is finished.

\end{proof}

\section{On the cup-length of the Grassmann manifold $\widetilde G_{n,k}$}
\label{sec:newresultsoncup-length} Recall that the $\mathbb Z_2$-cup-length,
$\mathrm{cup}(X)$, of a compact path connected topological space $X$ is defined to be the
maximum of all numbers $c$ such that there exist, in positive degrees, cohomology classes
$a_1,\dots,a_c\in H^\ast(X)$ such that their cup product $a_1\cdots a_c$ is nonzero. In
\cite{korbas2010} and, independently, in \cite{fukaya}, it was proved that for $t\geq 3$ we
have
\[
\mathrm{cup}(\widetilde G_{2^t - 1,3})=2^t - 3;\] in addition, \cite[Theorem 1.3]{korbas2010}
gave certain upper bounds for $\mathrm{cup}(\widetilde G_{n,k})$.

Now Theorem \ref{th:charrankoforientedcanonicalbundles} implies the following exact result
for $\widetilde G_{2^t,3}$, confirming the corresponding claim in Fukaya's conjecture
\cite[Conjecture 1.2]{fukaya}, or improvements on the results of \cite[Theorem
1.3]{korbas2010} in the other cases.

\begin{theorem} \label{thm:cup-lengthoforGrassmmanifs} For the oriented Grassmann
manifold $\widetilde G_{n,k}$ $(3\leq k\leq n-k)$, with $2^{t-1}<n\leq 2^t$, we have

\begin{enumerate}

\item $\mathrm{cup}(\widetilde G_{n,3})
\left\{\begin{array}{ll}=n-3&\text{ if } n = 2^t,\\

\leq (2n-3-i)/2 &\text{ if } n=2^t -i,\,i\in \{2,\,3\},\\
\leq n-3 &\text{ otherwise, for $n\neq 2^t -1$; }
\end{array}\right.$
\vspace{4mm}

\item $\mathrm{cup}(\widetilde G_{n,4})
\left\{\begin{array}{ll}
\leq (3n-10-i)/2 &\text{ if } n = 2^t-i,\,i\in \{0,\,1,\,2,\,3\},\\

\leq (3n-12)/2 &\text{ otherwise; } \\
\end{array}\right.$
\vspace{4mm}
\item if $k\geq 5$, then $\mathrm{cup}(\widetilde G_{n,k}) \leq \dfrac{(k-1)(n-k)}{2}$.
\end{enumerate}

\end{theorem}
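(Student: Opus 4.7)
The plan is to derive all three parts uniformly from the single estimate
\[
\mathrm{cup}(\widetilde G_{n,k}) \;\leq\; \tfrac{1}{2}\bigl(d - q + 1\bigr),
\]
where $d = k(n-k) = \dim\widetilde G_{n,k}$ and $q = \mathrm{charrank}(\widetilde\gamma_{n,k})$, and then to substitute the exact values (or lower bounds) for $q$ furnished by Theorem \ref{th:charrankoforientedcanonicalbundles}. A short case-by-case arithmetic check shows that this substitution reproduces each inequality appearing in (1)--(3); for instance, in part (3) the bound $q \geq n-k+1$ gives $(d-q+1)/2 \leq (k-1)(n-k)/2$, and parts (1) and (2) follow analogously.

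The estimate itself rests on two ingredients. First, the fundamental class $[\widetilde G_{n,k}] \in H^d(\widetilde G_{n,k})$ does \emph{not} lie in the subalgebra $R \subseteq H^*(\widetilde G_{n,k})$ generated by $\widetilde w_2, \dots, \widetilde w_k$ (recall $\widetilde w_1 = 0$ because $\widetilde\gamma$ is oriented). Since $p^*$ is a ring homomorphism with $p^*(w_i) = \widetilde w_i$, one has $R = \mathrm{im}(p^*)$, so the claim reduces to the non-surjectivity of $p^*\colon H^d(G_{n,k}) \to H^d(\widetilde G_{n,k})$. By the Gysin sequence \eqref{Gysinsequence} this is equivalent to the non-injectivity of $\cup w_1 \colon H^d(G_{n,k}) \to H^{d+1}(G_{n,k})$, which is automatic because $H^{d+1}(G_{n,k}) = 0$ while $H^d(G_{n,k}; \mathbb Z_2) = \mathbb Z_2 \neq 0$.

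Second, given a nonzero cup product $a_1 \cdots a_c$ with each $\deg a_i \geq 1$, the simple connectivity of $\widetilde G_{n,k}$ (for $k \geq 3$) forces $\deg a_i \geq 2$, and Poincar\'e duality either immediately or after one further multiplication by a positive-degree class yields $m \in \{c, c+1\}$ classes $\alpha_1, \dots, \alpha_m$ with $\deg \alpha_i \geq 2$, $\sum \deg \alpha_i = d$, and $\alpha_1 \cdots \alpha_m = [\widetilde G_{n,k}]$. Because $[\widetilde G_{n,k}] \notin R$, at least one $\alpha_i$ must have $\deg \alpha_i > q$: otherwise all factors lie in $R$ and so would their product. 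Writing $N' \geq 1$ for the number of factors of degree $\geq q+1$, the identity $d = \sum \deg \alpha_i \geq (q+1)N' + 2(m-N')$ rearranges to $2m \leq d - (q-1)N' \leq d - q + 1$, and hence $c \leq m \leq (d - q + 1)/2$.

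The exact value $\mathrm{cup}(\widetilde G_{2^t,3}) = 2^t - 3$ in part (1) additionally requires the matching lower bound, which I would produce by exhibiting an explicit nonzero cup product of length $2^t - 3$ in $\widetilde w_2$ and $\widetilde w_3$, using the description $R \cong \mathbb Z_2[w_2, w_3]/(g_{2^t-2}, g_{2^t-1}, g_{2^t})$ together with the vanishings supplied by Lemma \ref{lemma:relationsmodw_1} (in particular $g_{2^t-3} = 0$). The main obstacle I anticipate is this lower-bound calculation; the uniform upper bound itself reduces cleanly to the Gysin observation $[\widetilde G_{n,k}] \notin R$ combined with the elementary degree count above.
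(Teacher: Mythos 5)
Your upper-bound argument is correct and is essentially a self-contained re-proof, in this special case, of the Naolekar--Thakur inequality $\mathrm{cup}(X)\leq 1+\frac{d-j-1}{r_X}$ that the paper simply cites (with $r_X=2$ since $\widetilde G_{n,k}$ is simply connected): your Gysin-sequence observation that $p^*$ vanishes on $H^{d}(G_{n,k})$, because $\cup\, w_1\colon H^{d}(G_{n,k})\to H^{d+1}(G_{n,k})=0$ cannot be injective, is a clean substitute for the paper's Poincar\'e-duality argument on $G_{n,k}$ showing that every top-dimensional monomial in the $\widetilde w_i$ vanishes. The arithmetic substitutions of the values of $\mathrm{charrank}(\widetilde\gamma_{n,k})$ from Theorem \ref{th:charrankoforientedcanonicalbundles} check out in every case.

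The genuine gap is the lower bound $\mathrm{cup}(\widetilde G_{2^t,3})\geq 2^t-3$, which is needed for the one exact value in part (1) and which you only sketch. Worse, the sketched route cannot succeed: a nonzero cup product of $2^t-3$ factors \emph{all lying in} $R=\mathrm{im}(p^*)$ does not exist in general. For $t=3$ one computes $R\cong\mathbb Z_2[w_2,w_3]/(w_2^3+w_3^2,\,w_2^2w_3)$, and every product of five positive-degree elements of $R$ vanishes there (all length-five monomials $w_2^aw_3^b$ reduce to $0$ modulo these two relations), even though $\widetilde w_2^4\neq 0$. This is consistent with your own upper-bound mechanism: since $[\widetilde G_{n,k}]\notin R$, a maximal-length nonzero product must involve at least one ``anomalous'' class outside $R$. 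The paper's actual argument does exactly that: it quotes from \cite{korbas2010} that $\widetilde w_2^{\,2^t-4}\neq 0$ (a nontrivial computation, not a formal consequence of Lemma \ref{lemma:relationsmodw_1}) and then appends one Poincar\'e-dual factor of positive degree $2^t-1$, necessarily not in $R$, to reach length $2^t-3$. Without supplying both the nonvanishing of $\widetilde w_2^{\,2^t-4}$ and this extra non-$R$ factor, the equality $\mathrm{cup}(\widetilde G_{2^t,3})=2^t-3$ remains unproved.
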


\begin{proof}[\rm Proof] For a connected finite $CW$-complex $X$,
let $r_X$ denote the smallest positive integer such that $\widetilde{H}^{r_X}(X)\neq 0$. In
the case that such an integer does not exist, that is, all the reduced cohomology groups
$\widetilde{H}^i(X)$ $(1\leq i\leq \mathrm{dim}(X))$ vanish, we set $r_X=\mathrm{dim}(X)+1$;
thus always $r_X\geq 1$. To obtain the upper bounds stated in the theorem, we use the
following generalization of \cite[Theorem 1.1]{korbas2010}.

\begin{theorem}[A. Naolekar - A. Thakur \cite{naolekarthakur}]\label{Naolekar-Thakuroncup-length}
Let $X$ be a connected closed smooth $d$-dimensional manifold. Let $\xi$ be a vector bundle
over $X$ satisfying the following: there exists $j$, $j\leq \mathrm{charrank}_X(\xi)$, such
that every monomial $w_{i_1}(\xi)\cdots w_{i_r}(\xi)$, $0\leq i_t\leq j$, in dimension $d$
vanishes. Then
$$\mathrm{cup}(X)\leq 1+\frac{d-j-1}{r_X}.$$
\end{theorem}

For the manifold $\widetilde G_{n,k}$, every top-dimensional monomial in the Stiefel-Whitney
classes of the canonical bundle $\widetilde \gamma_{n,k}$ vanishes (indeed, if a
top-dimensional monomial in the Stiefel-Whitney classes of  $\widetilde \gamma_{n,k}$ does
not vanish, then it is a $p^\ast$-image of the corresponding non-vanishing top-dimensional
monomial in the Stiefel-Whitney classes of  $\gamma_{n,k}$; due to Poincar\'e duality, the
latter monomial can be replaced with a monomial divisible by $w_1(\gamma_{n,k})$; but
$p^\ast$ maps this monomial to zero). Now the upper bounds stated in Theorem
\ref{thm:cup-lengthoforGrassmmanifs} are obtained by taking $X=\widetilde G_{n,k}$ $(3\leq
k\leq n-k)$, $\xi=\widetilde \gamma_{n,k}$, and $j$ equal to the right-hand side of the
corresponding (in)equality given in Theorem \ref{th:charrankoforientedcanonicalbundles}.

For $\widetilde G_{2^t,3}$, it was proved in \cite[p. 77]{korbas2010} that $w_2(\widetilde
\gamma)^{2^t-4}$ does not vanish. This implies that $\mathrm{cup}(\widetilde G_{2^t,3})\geq
2^t-3$; this lower bound coincides with the upper bound proved above. The proof is finished.
\end{proof}

\begin{ack} The author thanks Peter Zvengrowski and the referee for useful comments related to the
presentation of this paper.
\end{ack}

\end{document}